\newtheorem{theorem}{Theorem}[section]
\newtheorem{lemma}[theorem]{Lemma}
\newtheorem{conjecture}[theorem]{Conjecture}
\theoremstyle{definition}
\newtheorem{example}[theorem]{Example}
\newtheorem{remark}[theorem]{Remark}
\newcommand{\R}{\mathbb{R}}
\newcommand{\Z}{\mathbb{Z}}
\newcommand{\N}{\mathbb{N}}
\newcommand{\ball}[2]{B\left(#1,#2\right)}
\newcommand{\diam}{\operatorname{diam}}
\renewcommand{\d}[1]{d\left(#1\right)}
\newcommand{\Frechet}{Fr\'{e}chet }
\long\def\pag{\beg=\count0
\def\phead{}}
\long\def\maketitlebcp{\def\and{{\lowercase{and}}}\pag
\vfootnote{}{\noindent\kern-15pt 2020 {\it Mathematics Subject
Classification}\/: \mclass}%
\vfootnote{}{\noindent\kern-15pt {\it Key words
and phrases}\/: \kwords}
\ifnum\tnum=100 \unhbox\thanksbox\fi
\vfootnote{}{}{}
\vglue7cc
{\large\bf\centering\settitle\par
}\box\abbox
\ded
\vskip8pt}
\begin{document}
\keywords{remetrization techniques, Frink's theorem, Assouad theorem, semimetrics, quasimetrics, embeddings, doubling spaces;}
\mathclass{Primary 54E25, 54E35.}

\title{How much can we extend the Assouad embedding theorem?}
\abbrevauthors{F. Turoboś and O. Dovgoshey}
\abbrevtitle{On extending Assouad embedding theorem}

\author{Filip Turobo\'s}
\address{Institute of Mathematics,\\
Lodz University of Technology\\
Łódź, Poland\\
e-mail: filip.turobos@p.lodz.pl}
\author{Oleksiy Dovgoshey}
\address{Institute of Applied Mathematics and Mechanics of NASU\\
Slovyansk, Ukraine and\\
Institut fuer Mathematik Universitaet zu Luebeck\\
Lubeck, Germany\\
e-mail: oleksiy.dovgoshey@gmail.com}

\maketitlebcp           
\begin{abstract}
The celebrated Assouad embedding theorem has been known for over 40 years. It states that for any doubling metric space (with doubling constant $C_0$) there exists an integer $N$, such that for any $\alpha\in (0.5,1)$ there exists a positive constant $C(\alpha,C_0)$ and an injective function $F:X\to \mathbb{R}^N$ such that
\[
\forall x,y\in X \quad C^{-1} d(x,y)^{\alpha} \leq \|F(x)-F(y) \|  \leq Cd(x,y)^{\alpha}
\]

In the paper we use the remetrization techniques to extend the said theorem to a broad subclass of semimetric spaces. We also present the limitations of this extension -- in particular, we prove that in any semimetric space which satisfies the claim of the Assouad theorem, the relaxed triangle condition holds as well, which means that there exists $K\geq 1$ such that
\[
\forall x,y,z\in X \quad d(x,z)\leq K\left( d(x,y)+d(y,z)\right).
\]
\end{abstract}
\section{Introduction}

The early concept of the triangle inequality can be traced back to Euclid's Elements \cite{Euclid1956}, which date back to 300 BC. Regardless, it took the mathematicians over 20 centuries to formalize the notion of a metric space \Frechet \cite{Frechet1906}. The subsequent results which further explored and generalized the concept of distance appeared (among others) in the works of Chittenden \cite{Chittenden1917} and Wilson \cite{Wilson1931}. It should also be noted here that the semimetric spaces were first considered by \Frechet in \cite{Frechet1906}, where he referred to them by the name ``\textit{classes E}''. The notion of semimetric space arises if we drop the triangle inequality from the formal definition of distance. They are very interesting object on its own -- their inherent, chaotic nature is alluring, considering how closely they seem to be related to their elegant metric counterparts.

Due to the erratic behavior of such spaces, additional, weaker versions of the triangle inequality started to emerge. These can be again traced back to Chittenden \cite{Chittenden1917}, \cite{Chittenden1927} and Wilson \cite{Wilson1931}, but are continuously developed even nowadays. In order to bring down the semimetric space analysis to a well-known metric ground, the mathematicians started to explore the possibilities of remetrization of such spaces or embedding them in a well-known metric space, after slight alteration of the initial distance concept. Regarding the first possibility, in 1937, pursuing the works of Chittenden \cite{Chittenden1927}, Frink \cite{Frink1937} provides the existence of an Lipschitz-equivalent metric to a given quasimetric\footnote{To the best of our knowledge, the concept of such relaxation was formally introduced in the works of Bourbaki \cite{Bourbaki1966}, however the name ``$b$-metric space'' is often associated with Czerwik \cite{Czerwik1993}.} (with relaxation constant $K = 2$). This result is then explored in the papers of Schroeder \cite{Schroeder2006} as well as thoroughly improved in the paper of Chrz\k{a}szcz et al. \cite{Chrzaszcz2018}. Similar results, albeit in a slightly different spirit, can be found in the (earlier) paper of Fagin et al. \cite{Fagin2003}.

The second of the discussed paths was paved by Assouad \cite{Assouad1979,David2013}, who proved that for a doubling metric space\footnote{That is, a space, where every ball of any positive radius $r$ can be covered by a fixed, finite number of balls with radius $\frac{r}{2}$. Formal definition appears in the subsequent section.}   it is possible to embed it into an euclidian space of some sufficiently high dimension, after altering the initial distance by raising it to some power $0<\alpha < 1$. This celebrated result bloomed into a whole theory of doubling spaces and allowed to further explore their relationship with the so-called doubling measures (more on that topic can be found, for example, in the nice lectures of Heinonen, see \cite[Chapters 1,10,13]{Heinonen2001}). In this short paper we explore the possibilities of extending this theorem to a broader scope of semimetric spaces via the use of Frinks metrization technique. We also prove that extending it beyond certain point is meaningless, as the statement of the theorem enables us to draw conclusions on the type of space we are dealing with.

The paper is organized as follows: in the subsequent section we introduce the basic definitions and known results from past papers. We then proceed with the first main part of the article, which focuses on extending the Assouad theorem. Fourth section deals with the limitations of such extensions, while the last part of the paper poses some open problems and draws the directions of further research in this area.

\section{Preliminaries}

Throughout the paper we denote by $\R$ and $\N$ the set of real numbers and positive integers, respectively. We begin with the definition of a semimetric space.

Let $X\neq \emptyset$ and $d:X\times X \to [0,+\infty)$ be a function satisfying the following conditions for all $x,y\in X$:
\begin{itemize}
\item[(S1)] $\d{x,y}=0 \iff x=y$;
\item[(S2)] $\d{x,y}=\d{y,x}$.
\end{itemize}
Then the pair $(X,d)$ is called a \textbf{semimetric space} (and $d$ is called a \textbf{semimetric}). If additionally there exists $K\geqslant 1$ such that $d$ satisfies
\begin{itemize}
\item[(B)] $\d{x,z}\leqslant K\left( \d{x,y} + \d{y,z}\right)$ for all $x,y,z \in X$
\end{itemize}
then $(X,d)$ is said to be a \textbf{$b$-metric space} with \textbf{relaxation constant} $K$. If one can pick $K=1$ we obtain the definition of \textbf{metric space}.

In semimetric scope the classic notions of sequence convergence, open/closed ball, completeness and so forth are identical to their counterparts in metric spaces. Nevertheless, they tend to have slightly less regular properties. More on that topic can be found e.g. in \cite{Chrzaszcz2018prim}. We know quote the two refined versions of Frink's theorem \cite{Chrzaszcz2018}. The first one differs from the versions from \cite{Frink1937,Schroeder2006} by having tightened bounds on the original distance $d$.

\begin{theorem}[\textbf{Frink's theorem (revisited)}]\label{thm:specific Frink bounds}
Let $(X,d)$ be a $b$-metric with relaxation constant $K\leqslant 2$. Then, there exists a metric $D$ for which
\[
\forall x,y\in X \quad D(x,y) \leqslant d (x,y) \leqslant K^2 D(x,y).
\]
\end{theorem}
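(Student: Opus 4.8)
The plan is to reconstruct the classical Frink metrization via the chain (or ``string'') construction and then verify the two-sided estimate by hand. First I would set
\[
D(x,y) = \inf \left\{ \sum_{i=0}^{n-1} d(x_i, x_{i+1}) : n \in \N,\ x_0 = x,\ x_n = y,\ x_i \in X \right\},
\]
the infimum being taken over all finite chains joining $x$ to $y$. Concatenation of chains together with the symmetry (S2) of $d$ shows at once that $D$ is symmetric, vanishes on the diagonal, and satisfies the genuine triangle inequality, so that $D$ is a pseudometric. Testing the one-link chain $x_0 = x$, $x_1 = y$ gives the easy half of the claim, $D(x,y) \leqslant d(x,y)$, for free.

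The whole weight of the theorem therefore rests on the lower estimate $d(x,y) \leqslant K^2 D(x,y)$, which, by the definition of $D$ as an infimum, is equivalent to the purely combinatorial \emph{chain inequality}
\[
d(x_0, x_n) \leqslant K^2 \sum_{i=0}^{n-1} d(x_i, x_{i+1}) \qquad \text{for every chain } x_0, \dots, x_n.
\]
Once this is established, taking the infimum over chains yields $d(x,y) \leqslant K^2 D(x,y)$; in particular $D(x,y) > 0$ whenever $x \neq y$ by (S1), so $D$ is in fact a metric and not merely a pseudometric, which closes the argument.

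I would prove the chain inequality by strong induction on the number of links $n$, writing $S = \sum_{i} d(x_i, x_{i+1})$. The cases $n = 1$ and $n = 2$ are immediate from $K \geqslant 1$ and from (B) respectively. For the inductive step the natural device is a \emph{balanced split}: choose the largest index $m$ for which the partial sum $\sum_{i<m} d(x_i,x_{i+1})$ does not exceed $S/2$, so that both the initial segment $x_0,\dots,x_m$ and the terminal segment $x_{m+1},\dots,x_n$ have total $d$-length at most $S/2$ and strictly fewer links than the whole chain. Applying the induction hypothesis to these two shorter sub-chains, estimating the single straddling link $d(x_m,x_{m+1})$ directly, and then recombining the three pieces by means of (B), one reassembles a bound on $d(x_0,x_n)$.

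The delicate point, and what I expect to be the main obstacle, is precisely the bookkeeping of the constant in this recombination step: a naive gluing inflates the factor with each use of (B), and it is here that the hypothesis $K \leqslant 2$ must enter in an essential way to keep the accumulated constant down to exactly $K^2$ rather than something larger that degrades with $n$. I would therefore organise the induction around an invariant sharp enough to survive the recombination — controlling the contribution of the ``long'' straddling link separately from the two halves — and check that under $K \leqslant 2$ the halving exactly compensates the loss from each application of (B). The remaining matters, namely symmetry, the degenerate cases $m=0$ or $m=n-1$, and $S=0$, are routine.
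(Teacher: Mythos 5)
Your reduction is set up correctly: with $D$ defined as the infimum of chain-lengths, symmetry, the triangle inequality and $D\leqslant d$ are indeed immediate, and the entire theorem collapses to the $K^2$-relaxed polygon inequality $d(x_0,x_n)\leqslant K^2\sum_{i=0}^{n-1}d(x_i,x_{i+1})$ (which also gives positivity of $D$). This is the same frame as in the source the paper quotes this statement from (Chrz\k{a}szcz--Jachymski--Turobo\'s \cite{Chrzaszcz2018}); note the paper itself offers no proof, it only cites that reference. But the one step you leave open is the whole theorem, and the specific scheme you sketch cannot close. With your balanced split, write $a=\sum_{i<m}d_i\leqslant S/2$, $c=\sum_{i>m}d_i\leqslant S/2$ and $d_m\leqslant S$ for the straddling link; recombining with two applications of (B) and the inductive bound $K^2\cdot(S/2)$ on the two halves gives
\[
d(x_0,x_n)\ \leqslant\ K\,d(x_0,x_m)+K^2 d(x_m,x_{m+1})+K^2 d(x_{m+1},x_n)\ \leqslant\ \Bigl(\tfrac{K^3}{2}+K^2+\tfrac{K^4}{2}\Bigr)S,
\]
and $\tfrac{K^3}{2}+K^2+\tfrac{K^4}{2}\geqslant 2K^2>K^2$ for every $K\geqslant 1$; even at $K=1$ the scheme yields the constant $2$ instead of $1$. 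So your hope that ``the halving exactly compensates the loss from each application of (B)'' under $K\leqslant 2$ is not a bookkeeping detail to be checked but is simply false for this invariant: the obstruction is structural and has nothing to do with the hypothesis $K\leqslant 2$.

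A correct argument must strengthen the induction hypothesis rather than the split: one needs a weighted polygon-type estimate in the spirit of Frink's original lemma (where the extremal links carry smaller coefficients than the interior ones), together with a genuine case analysis on where the ``heavy'' links sit, and it is only in verifying that this sharper invariant propagates that the restriction $K\leqslant 2$ enters; this is exactly what is carried out in \cite{Chrzaszcz2018}. One must also be careful which weighted form to aim for: for instance the naive candidate $d(x_0,x_n)\leqslant K d_0+K^2(d_1+\dots+d_{n-2})+K d_{n-1}$ already fails for $n=3$ in a four-point $b$-metric space with $d_0=d_2=1$, $d_1=\varepsilon$ and $d(x_0,x_3)=K+K^2(1+\varepsilon)$, so identifying an invariant that both holds and recombines is the actual content of the theorem. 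As it stands, your proposal proves the easy half ($D\leqslant d$, $D$ a pseudometric) and restates the hard half without a workable route to it.
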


The second version allows us to drop the requirement on $K$ by introducing H\"older-type relationship between the original distance and its metric counterpart.

\begin{theorem}[\textbf{Frink's remetrization technique for $b$-metric spaces}]\label{thm:generalFrink}
Let $(X,d)$ be a $b$-metric space with relaxation constant $K\geqslant 1$. Then for any $\varepsilon > 0$, there exist $p\in (0,1]$ and a metric $D$ for which
\[
\forall x,y\in X \quad D(x,y) \leqslant d^p (x,y) \leqslant (1+\varepsilon) D(x,y).
\]
\end{theorem}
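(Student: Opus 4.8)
The plan is to exploit the fact that raising a $b$-metric to a small power simultaneously drives its relaxation constant toward $1$ and shrinks the multiplicative distortion produced by Theorem~\ref{thm:specific Frink bounds}. Concretely, for $p\in(0,1]$ I would set $\rho:=d^{p}$ and reduce the problem to the already-established case $K\leqslant 2$.

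First I would verify that $\rho=d^{p}$ is again a semimetric: conditions (S1) and (S2) are inherited immediately, since $t\mapsto t^{p}$ is strictly increasing on $\nonneg$ and vanishes only at $0$. The key step is to estimate the relaxation constant of $\rho$. Using the relaxed triangle inequality (B) for $d$ together with the elementary subadditivity $(a+b)^{p}\leqslant a^{p}+b^{p}$, valid for all $a,b\geqslant 0$ and $p\in(0,1]$, I would compute, for arbitrary $x,y,z\in X$,
\[
\rho(x,z)=d(x,z)^{p}\leqslant\bigl(K(d(x,y)+d(y,z))\bigr)^{p}=K^{p}\bigl(d(x,y)+d(y,z)\bigr)^{p}\leqslant K^{p}\bigl(\rho(x,y)+\rho(y,z)\bigr).
\]
Thus $\rho$ is a $b$-metric with relaxation constant at most $K^{p}$, and $K^{p}\to 1$ as $p\to 0^{+}$.

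With this in hand I would choose $p$ small. If $K=1$ the space is already a metric space and $p=1$, $D=d$ trivially work, so assume $K>1$. I would then pick any
\[
p\in\Bigl(0,\ \min\Bigl\{1,\ \tfrac{\ln 2}{\ln K},\ \tfrac{\ln(1+\varepsilon)}{2\ln K}\Bigr\}\Bigr],
\]
which guarantees both $K^{p}\leqslant 2$ and $K^{2p}\leqslant 1+\varepsilon$. The first inequality lets me apply Theorem~\ref{thm:specific Frink bounds} to the $b$-metric $\rho$ (whose relaxation constant $K^{p}$ does not exceed $2$), yielding a metric $D$ with $D(x,y)\leqslant\rho(x,y)\leqslant (K^{p})^{2}D(x,y)$. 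Combining this with $\rho=d^{p}$ and the second inequality gives
\[
D(x,y)\leqslant d^{p}(x,y)\leqslant K^{2p}D(x,y)\leqslant (1+\varepsilon)D(x,y),
\]
which is exactly the assertion of Theorem~\ref{thm:generalFrink}.

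I do not expect a serious obstacle here: the entire argument rests on the subadditivity of the power function and on Theorem~\ref{thm:specific Frink bounds}. The only point requiring genuine care is the joint choice of $p$; it must be small enough both to bring the relaxation constant of $d^{p}$ below the threshold $2$ demanded by Theorem~\ref{thm:specific Frink bounds} and, simultaneously, to compress the resulting distortion $K^{2p}$ below $1+\varepsilon$. The displayed bound on $p$ achieves both constraints at once, and the separate treatment of $K=1$ handles the degeneracy where $\ln K$ vanishes.
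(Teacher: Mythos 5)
Your argument is correct: snowflaking $d$ to $d^{p}$ (via the subadditivity $(a+b)^{p}\leqslant a^{p}+b^{p}$) drives the relaxation constant down to $K^{p}\leqslant 2$, and applying Theorem~\ref{thm:specific Frink bounds} with the sharper constant $K^{p}$ rather than $2$ yields exactly the distortion $K^{2p}\leqslant 1+\varepsilon$, with the degenerate case $K=1$ handled separately. The paper itself only quotes this theorem from \cite{Chrzaszcz2018} without proof, and the proof in that reference proceeds by essentially the same reduction, so your route matches the intended one.
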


The notion of doubling property (which is supposed to resemble the finite-dimensiona\-lity for normed spaces) in the case of semimetric spaces is not well-explored in general. We first introduce two notions of doubling property (instead of one, as it is in metric case) and in the subsequent part we explain where did this difference came from. Let us begin with the notion of \textbf{diameter}, which for a subset $A\subset X$ of a semimetric space $(X,d)$ is defined as $\diam A :=\sup_{x,y\in A} d(x,y)$. About a semimetric space $(X,d)$ one can say that it is:
\begin{itemize}
\item \textbf{weakly doubling} if there exists a constant $C\in \N$, such that any bounded set $D$ with diameter $l<\infty$ can be covered by at most $C$ sets with diameters bounded by $\frac{l}{2}$.
\item \textbf{doubling} if there exists a constant $C\in \N$, such that for any $x\in X$ and positive radius $r$, an open ball $\ball{x}{r}$ can be covered by open balls $\ball{x_i}{\frac{r}{2}}$, where $x_i\in X$ for $i\in \{1,\dots, C\}$.
\end{itemize}

These notions clearly coincide in the scope of metric spaces (since every bounded set can be put inside a ball with radius equal to its diameter; moreover, every ball is a bounded set). In the semimetric scope this is not the case, hence the distinction. An appropriate example at the beginning of the subsequent section. We conclude this part by quoting the celebrated Assouad theorem we are going to investigate.

\begin{theorem}[\textbf{Assouad embedding Theorem}]\label{thm:Assouad Theorem}
Let $(X,d)$ be a doubling metric space with doubling constant $C_0\geq 1 $.
There exists an integer $N\in \mathbb{N}$ such that for $\alpha \in \left(\frac{1}{2},1\right)$ there is a constant $C$ (which depends only on $C_0$ and $\alpha$) and an injective function $F\colon X\to \R^N$, for which the following inequalities hold:
\begin{equation}\label{eq:Assouad_Bound}
\forall x,y\in X \quad C^{-1}d^\alpha(x,y) \leqslant \| F(x) - F(y) \| \leqslant C d^\alpha(x,y)
\end{equation}
\end{theorem}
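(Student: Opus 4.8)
The plan is to build the embedding one scale at a time and then sum the pieces with snowflake weights. Fixing dyadic scales $r_j = 2^{-j}$ for $j \in \Z$, I would first invoke the doubling hypothesis to produce, at each scale, a maximal $r_j$-separated net $\mathcal{N}_j \subseteq X$: maximality forces the balls $\ball{p}{r_j}$, $p \in \mathcal{N}_j$, to cover $X$, while separation together with the doubling constant $C_0$ bounds the number of net points meeting any fixed ball by a quantity depending only on $C_0$. This bounded overlap is exactly what lets me color each net $\mathcal{N}_j$ with a palette of size $M = M(C_0)$ so that points of the same color stay well separated, which is the mechanism that ultimately keeps the target dimension finite.

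Next I would attach to each net point $p$ a Lipschitz bump $\phi_{j,p}(x) = \max\{0,\, 1 - d(x,p)/r_j\}$, supported in $\ball{p}{r_j}$ and of order $1$ near $p$. Grouping the bumps of a fixed color at a fixed scale into a single function (their disjoint supports make this harmless) and modulating the scales by a bounded family of trigonometric factors, I would take the coordinates of $F$ to be weighted sums over scales whose scale-$j$ term is $r_j^{\alpha}$ times the color-grouped, modulated bumps. The modulation is what separates the contributions of different scales so that they cannot systematically cancel, while the factor $r_j^{\alpha}$ is the snowflaking that renders $\|F(x)-F(y)\|$ comparable to $d^{\alpha}(x,y)$ rather than to $d(x,y)$.

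To verify \eqref{eq:Assouad_Bound} I would split the scale sum at the index $j_0$ with $r_{j_0} \approx t := d(x,y)$. A coarse scale $r_j \gg t$ contributes at most $t\, r_j^{\alpha-1}$ (each weighted bump is $r_j^{\alpha-1}$-Lipschitz), and $\sum_{r_j \gg t} t\, r_j^{\alpha-1} \lesssim t^{\alpha}$ precisely because $\alpha < 1$; a fine scale $r_j \ll t$ contributes at most its amplitude $r_j^{\alpha}$, and $\sum_{r_j \ll t} r_j^{\alpha} \lesssim t^{\alpha}$ because $\alpha > 0$. Summing both tails gives the upper bound. For the lower bound I would isolate the single critical scale: taking $r_{j_0}$ slightly below $t$ and a net point $p \in \mathcal{N}_{j_0}$ near $x$, the bump $\phi_{j_0,p}$ is nonzero at $x$ but vanishes at $y$ (since $d(p,y) \geq t - r_{j_0} > r_{j_0}$), producing a contribution of order $t^{\alpha}$ that the modulation prevents the other scales from erasing.

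The hard part is keeping the dimension $N$ independent of the exponent $\alpha$ while retaining the lower bound, since the naive coloring forces $N$ to blow up as $\alpha \uparrow 1$. Controlling this is exactly what restricts the statement to $\alpha \in (1/2,1)$: one replaces the deterministic nets and colorings by random padded partitions of $X$ at each scale, whose padding radius and cluster diameter depend only on $C_0$, and then averages a bounded number of such random coordinate functions so that the lower bound holds in expectation — hence for some fixed choice — without $N$ depending on $\alpha$. This stochastic refinement is the genuinely delicate ingredient; the multiscale assembly and the geometric-series estimates are comparatively routine.
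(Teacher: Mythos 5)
The paper does not prove this statement at all: Theorem~\ref{thm:Assouad Theorem} is quoted as a known result and attributed to Assouad \cite{Assouad1979} and David--Snipes \cite{David2013}, so there is no internal proof to compare against; your proposal has to be judged against the standard arguments in those references. Your multiscale outline (dyadic nets, bounded-overlap coloring with $M=M(C_0)$ colors, tent functions $\phi_{j,p}$, snowflake weights $r_j^{\alpha}$, splitting the scale sum at $r_{j_0}\approx d(x,y)$) is indeed the skeleton of Assouad's proof, and your geometric-series estimates for the upper bound are correct. But the two points that carry the actual content of the theorem are left as gestures. First, the lower bound: saying that ``the modulation prevents the other scales from erasing'' the critical-scale contribution is precisely the step that needs an argument. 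A bounded family of trigonometric factors does not by itself prevent cancellation; in the classical proof one must assign the scales of a given color to coordinate blocks cyclically (or choose phases on a sphere of controlled dimension) so that the contributions of all \emph{other} scales landing in the same coordinates form a geometric series whose sum is strictly smaller than the $c\,t^{\alpha}$ produced at the critical scale, and this is where the constants, the choice of scale ratio, and (in Assouad's original version) the dependence of $N$ on $\alpha$ enter. Without that bookkeeping the lower bound in \eqref{eq:Assouad_Bound} is not established.

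Second, and more seriously for the statement as formulated here: the theorem asserts a single $N$ depending only on $C_0$, valid for every $\alpha\in\left(\frac{1}{2},1\right)$. That $\alpha$-independence of the dimension is not a ``refinement'' one can wave at --- it is the Naor--Neiman theorem (proved deterministically in \cite{David2013}), and your paragraph invoking ``random padded partitions whose padding radius and cluster diameter depend only on $C_0$'' and an averaging so that ``the lower bound holds in expectation --- hence for some fixed choice'' names that theorem rather than proving it; in particular the expectation argument as stated is not even valid, since a two-sided bi-Lipschitz bound holding on average does not yield a single map satisfying it for all pairs without the actual padded-decomposition machinery. So as written the proposal proves neither the lower bound of the classical Assouad theorem nor the dimension bound that distinguishes the quoted statement; it is a correct road map, with the two hard ingredients missing.
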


\section{Doubling semimetric spaces and the Assouad theorem}

We begin this part of our paper with the promised example, which presents a semimetric space which is weakly doubling but fails to be doubling.

\begin{example}\label{ex:31}
Let $\mathbb{Z}$ be the set of all integer numbers and let $(X,d)$ be a semimetric space with $X=\mathbb{Z}$ and such that
\begin{equation}\label{eq:nondoubling semimetric on Z}	
d(x,y) =
\begin{cases}
1, & \text{if } x = 0 \text{ and } y \in \mathbb{Z} \setminus \{0\}\\
|x-y|, & \text{if } x,y\in \mathbb{Z}\setminus \{0\} \\
0, & \text{if } x=y=0.
\end{cases}
\end{equation}

Consider an open ball $\ball{0}{r}$ with radius $r\in (1,2)$. Since $d(0,z) = 1 < r$ for any non-zero integer $z$, then $B(0,r) = \mathbb{Z}$. Consider the open ball with radius $r^\prime = \frac r 2$ and center $x\in X$. Since $r^\prime \in (\frac 1 2, 1)$, then $B(x,r^\prime)$ cannot contain any other element than $x$ itself (every two elements of $\Z$ are at least $1$ unit of distance apart). Consequently, one cannot cover $B(0,r)$ with finite family of balls with the radii at most $r^\prime$, thus space fails to be doubling.

Now, we can prove that the space \((X, d)\) is weakly doubling. Obviously, every bounded set in this space is finite, due to $X$ being essentially an isometric copy of $\mathbb{Z}\setminus\{0\}$ equipped with an Euclidean distance with one, additional point. Therefore, the fact that we can cover any set of finite diameter with just several sets of lower diameter is rather clear. To stay within the formal setting, we claim that $C \leqslant 3$ which proves its finiteness.

Let $A\subset X$ be an arbitrary set of finite diameter. If $A$ contains at most $3$ elements, the choice of covering sets is obvious (since we can use singletons, which have zero diameter). In other case, $A$ contains at least two distinct numbers from $\mathbb{Z}\setminus\{0\}$.

Therefore, let $m,M$ be minimal and maximal (respectively) elements of $A\cap\left(\Z\setminus\{0\} \right)$. Since $d(0, z)=1$ for $z\in \Z\setminus\{0\}$, then one can easily notice that $\diam A = M - m$. Consider sets
\begin{itemize}
\item $A_1:=\{0\};$
\item $A_2:=\overline{B}\left(M,\frac{M-m}{2} \right) \cap \{m, m+1, \dots, M \};$
\item $A_3:=\overline{B}\left(m,\frac{M-m}{2} \right) \cap \{m, m+1, \dots, M \},$
\end{itemize}
where $\overline{B}\left(M,\frac{M-m}{2} \right)$ and $\overline{B}\left(m,\frac{M-m}{2} \right)$ are closed balls with radii $\frac{M-m}{2}$ and centers $M$ and $m$ respectively.

One can see that $A=A_1\cup A_2\cup A_3$, since the first set takes care of possible point outside of $\mathbb{Z}\setminus\{0\}$ and latter ones contain the integer part of our set. As a result, $C\leqslant 3$.

\end{example}

Clearly, an assumption which ties the notion of boundedness with the possibility of containing a bounded set within a ball with sufficiently large radius is enough to show the equivalence of weak doubling and doubling properties. The triangle-like condition assumed in the definition of $b$-metric space is an example of such premise. The following questions naturally arises as a follow-up:
\textit{Does doubling property imply that the semimetric space is weakly doubling  as well? If not, what additional assumptions need to be made?}

A rather easy observation might yield some answers. It can be stated as follows:

\textbf{Observation:}
Let $(X,d)$ be a semimetric space. For every nonempty, bounded set $A$, there exists an open ball of finite radius $r>0$ and center $x\in A$ such that $B(x,r) \supset A$ (one can pick $r:=\diam (A) +1$ for instance).

We have managed to find an interesting example of a space which enjoys the doubling property while fails to be weakly doubling.

\begin{example}
Let $(X,d)$ be a metric space which is not doubling and let $X\cap \mathbb{N} = \emptyset$. Let us define a semimetric $\rho$ on $Y=X\cup\mathbb{N}$ such that \[
\rho(x,y) = \begin{cases}
d(x,y), & \text{if } x, y\in X\\
\max\{\frac{1}{x}, \frac{1}{y}\}, & \text{if } x, y\in \mathbb{N} \text{ and } x \neq y\\
\frac{1}{x}, & \text{if } x \in \mathbb{N} \text{ and } y \in X.
\end{cases}
\]
Then $(Y,\rho)$ is doubling with doubling constant $C=1$ (since every ball with a center in $X$ contains some natural numbers as well, and thus can be easily covered by a ball centered at sufficiently large $n\in\mathbb{N}$) but it is not weakly doubling because any subspace of a weakly doubling semimetric space is also weakly doubling, while $X$ clearly fails to satisfy this condition.
\end{example}

Considering some Banach space of infinite dimension with metric induced by norm in the role of $X$ yields a slightly more tangible example, if one deems it necessary.

Now we will show that the scope of Theorem \ref{thm:Assouad Theorem} can be extended to the class of $b$-metric spaces. This will be achieved via the use of remetrization techniques provided by Frink theorem. What we need to establish is whether they preserves the doubling property (perhaps with a different constant). This leads us to the following pair of lemmas:

\begin{lemma}[\textbf{On metric transformation which preserves the doubling condition}]\label{lemma:doubling lemma}
Let $(X,d)$ be a semimetric space with doubling constant $C$ and let $p>0$. Then $(X,d^p)$ is also a doubling semimetric space with a doubling constant $C'=C^n$, where $n=\lceil r \rceil$.
\end{lemma}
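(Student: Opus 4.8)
The plan is to observe that passing from $d$ to $d^p$ only reparametrizes the radii of balls, so that the doubling structure is preserved after finitely many iterations of the doubling step, with the number of iterations controlling the new constant.

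First I would record the elementary identity relating balls in the two semimetrics: for every $x\in X$ and every $r>0$,
\[
B_{d^p}(x,r)=\set{y\in X}{d^p(x,y)<r}=\set{y\in X}{d(x,y)<r^{1/p}}=B_d\!\left(x,r^{1/p}\right),
\]
where the subscript indicates which semimetric is used. Writing $\rho:=r^{1/p}$, the $d^p$-ball of radius $r$ is exactly the $d$-ball of radius $\rho$, whereas a $d^p$-ball of radius $r/2$ is exactly the $d$-ball of radius $(r/2)^{1/p}=\rho\,2^{-1/p}$. Hence the task of covering $B_{d^p}(x,r)$ by $d^p$-balls of radius $r/2$ is identical to the task of covering $B_d(x,\rho)$ by $d$-balls of radius $\rho\,2^{-1/p}$.

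Second, I would iterate the doubling hypothesis in $(X,d)$. Because that hypothesis applies to \emph{every} ball, it may be re-applied to each sub-ball it produces: cover $B_d(x,\rho)$ by $C$ balls of radius $\rho/2$, cover each of those by $C$ balls of radius $\rho/4$, and so on, obtaining after $k$ steps a cover of $B_d(x,\rho)$ by at most $C^k$ balls of radius $\rho/2^k$. No triangle-type inequality is needed here — the covering centers are allowed to range over all of $X$ — so this iteration is legitimate in the purely semimetric setting. I would then match the radii: choosing $k:=\lceil 1/p\rceil$ gives $2^{-k}\le 2^{-1/p}$, hence $\rho/2^k\le \rho\,2^{-1/p}$, and since a ball is contained in the concentric ball of larger radius, each fine ball $B_d(y_i,\rho/2^k)$ lies inside $B_d(y_i,\rho\,2^{-1/p})=B_{d^p}(y_i,r/2)$. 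Therefore the family $\{B_{d^p}(y_i,r/2)\}_i$ covers $B_{d^p}(x,r)$ and has at most $C^{\lceil 1/p\rceil}$ members, so $(X,d^p)$ is doubling with constant $C'=C^{n}$, where $n=\lceil 1/p\rceil$.

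The argument is essentially bookkeeping, so I do not expect a serious obstacle; the only points demanding care are verifying that iterating the doubling step is admissible without any triangle inequality (it is, as noted above) and correctly identifying the exponent as the least integer $n$ with $2^{-n}\le 2^{-1/p}$, namely $n=\lceil 1/p\rceil$. In the regime $p\ge 1$ a single doubling step already suffices and one gets $C'=C$, which is consistent with $\lceil 1/p\rceil=1$.
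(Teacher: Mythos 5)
Your proof is correct and follows essentially the same route as the paper's: identify $d^p$-balls with $d$-balls of rescaled radius, iterate the doubling hypothesis $n$ times (legitimately, without any triangle inequality), and match radii via $2^{-n}\le 2^{-1/p}$, yielding the constant $C^{\lceil 1/p\rceil}$. You also correctly read the exponent as $n=\lceil 1/p\rceil$, which is what the paper's proof uses (the ``$n=\lceil r\rceil$'' in the statement is a typo).
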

\begin{proof}
We start with pointing out that the obvious equivalence
\begin{equation}\label{eq:equivalence of d and dp}
d(x,y)<r \iff d^p(x,y) < r^p
\end{equation}
for any $x,y\in X$ and $r>0$. Let $n\in \N$ be large enough to $2^{np} \geqslant 2$ (taking $n=\lceil \frac{1}{p} \rceil$ is sufficient for our task). Repeated usage of the doubling property yields another rather obvious property of a doubling space, which is:
\[
B_d(x,r) \subset \bigcup_{i=1}^{C^n} B_d(x_i, \frac{r}{2^n})
\]
for some $(x_i)_{i=1}^{C^n}$, $x_1, \dots x_{C^n} \in X$. Now, to prove that $(X,d^p)$ is doubling, fix an arbitrary $s>0$ and put $r:=s^{\frac{1}{p}}$.
\[
B_{d^p}(x,s) = B_d(x,r) \subset \bigcup_{i=1}^{C^n} B_d(x_i, \frac{r}{2^n}) =
\bigcup_{i=1}^{C^n} B_{d^p}(x_i, \frac{s}{2^{np}}) \subset
\bigcup_{i=1}^{C^n} B_{d^p}(x_i, \frac{s}{2}).
\]
Since $s$ was taken arbitrarily, then $C':=C^n$ is a doubling constant for $(X,d^p)$.
\end{proof}

The second lemma will enable us to use Theorem \ref{thm:generalFrink} as our remetrization technique which preserves both the topological structure and the doubling property.

\begin{lemma}[\textbf{On preserving doubling property by bounding semimetric}]\label{lemma:doubling Frink approach}
Let $(X,d)$ be a doubling semimetric space. If a semimetric $D$ is bi-Lipschitz equivalent to $d$, that is there exists a positive constant $\alpha\geqslant 1$ such that
\begin{equation}\label{eq:boundTheoremDoubling}
\forall x,y\in X \quad	D(x,y) \leqslant d(x,y) \leqslant \alpha D(x,y)
\end{equation}
holds, then $(X,D)$ is a doubling semimetric space as well.
\end{lemma}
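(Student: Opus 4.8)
The plan is to transfer the doubling property from $d$ to $D$ by sandwiching $D$-balls between $d$-balls and then invoking the iterated doubling property already exploited in the proof of Lemma \ref{lemma:doubling lemma}. First I would record the two ball inclusions that follow at once from \eqref{eq:boundTheoremDoubling}. Since $D(x,y) \leqslant d(x,y)$, every point of a $d$-ball lies in the $D$-ball of the same radius, so
\[
B_d(x,r) \subseteq B_D(x,r) \qquad \text{for all } x\in X,\ r>0.
\]
Since $d(x,y) \leqslant \alpha D(x,y)$, any point with $D(x,y) < s$ satisfies $d(x,y) < \alpha s$, which gives
\[
B_D(x,s) \subseteq B_d(x,\alpha s) \qquad \text{for all } x\in X,\ s>0.
\]

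Next I would fix an arbitrary $D$-ball $B_D(x,s)$ and choose an integer $n$ large enough that $2^n \geqslant 2\alpha$ (for instance $n = \lceil \log_2(2\alpha) \rceil$). Writing $C$ for the doubling constant of $(X,d)$ and applying the doubling property repeatedly to the enclosing $d$-ball $B_d(x,\alpha s)$, exactly as in the proof of Lemma \ref{lemma:doubling lemma}, produces points $x_1,\dots,x_{C^n}$ with $B_d(x,\alpha s) \subseteq \bigcup_{i=1}^{C^n} B_d(x_i, \alpha s/2^n)$. Combining this with the two inclusions above and the choice of $n$ yields the chain
\[
B_D(x,s) \subseteq B_d(x,\alpha s) \subseteq \bigcup_{i=1}^{C^n} B_d\!\left(x_i, \frac{\alpha s}{2^n}\right) \subseteq \bigcup_{i=1}^{C^n} B_D\!\left(x_i, \frac{\alpha s}{2^n}\right) \subseteq \bigcup_{i=1}^{C^n} B_D\!\left(x_i, \frac{s}{2}\right),
\]
where the penultimate inclusion uses $B_d \subseteq B_D$ at equal radius and the last uses $\tfrac{\alpha s}{2^n} \leqslant \tfrac{s}{2}$. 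As $s$ was arbitrary, $(X,D)$ is doubling with constant $C^n$.

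I do not anticipate any serious obstacle: the argument is essentially a bookkeeping exercise in propagating the distortion factor $\alpha$ through the ball inclusions. The only point requiring care is the choice of $n$, which must simultaneously absorb the radius halving demanded by the definition of doubling and the factor $\alpha$ relating the two semimetrics; taking $n$ with $2^n \geqslant 2\alpha$ handles both at once. It is worth noting that the hypothesis $\alpha \geqslant 1$ forces $\log_2(2\alpha) \geqslant 1$, so $n \geqslant 1$ and the doubling definition is genuinely invoked rather than trivially satisfied.
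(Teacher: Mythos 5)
Your argument is correct and follows essentially the same route as the paper: sandwich each $D$-ball between $d$-balls via the two inclusions coming from \eqref{eq:boundTheoremDoubling}, then iterate the doubling property of $d$ enough times to absorb the distortion factor $\alpha$ (your condition $2^n \geqslant 2\alpha$ matches the paper's choice of $N$ with $\alpha < 2^{N-1}$). No gaps; the resulting doubling constant $C^n$ agrees with the paper's $C_0^N$.
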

\begin{proof}
Let $B_{D}(x,r)$ be an arbitrary open ball in the semimetric space $(X,D)$. Due to the right-hand side of the inequality of \eqref{eq:boundTheoremDoubling}, we see that
\[
D(z,y) < s \implies d(z,y)<\alpha s
\]
for any two points $y,z\in X$ and a positive real $s$. Thus, the following inclusion is justified:
\[
B_{D}(x,r)\subset B_{d} (x,\alpha r).
\]
Now we apply the doubling property $N$-fold (where $N\in\N$ satisfies $\alpha<2^{N-1}$) to obtain a covering of the latter ball by a finite amount of open balls $B_{d}\left(x_i, \frac{r}{2}\right)$, $i = 1,2,\dots, C_0^N$, where $C_0$ is a doubling constant of $d$.

Now, the first part of \eqref{eq:boundTheoremDoubling} yields us the following inclusion for any $z\in X$ and $s>0$:
\[
B_{d}(z,s) \subset B_D(z,s).
\]
Summing it up, we obtain
\[
B_D(x,r) \subset B_{d}(x,\alpha r)\subset \bigcup_{i=1}^{C_0^N} B_{d}\left(x_i, \frac{r}{2}\right) \subset \bigcup_{i=1}^{C_0^N} B_{D}\left(x_i, \frac{r}{2}\right).
\]
Consequently, $(X,D)$ has the doubling property.
\end{proof}

Having all the above lemmas, we are ready to present an extended version of Assouad theorem.

\begin{theorem}[\textbf{Assouad theorem for $b$-metric spaces}]
Let $(X,d)$ be a $b$-metric space with doubling property. There exists an integer $N\in\mathbb{N}$ such that for $\alpha \in \left( 0, 1 \right)$, there are a constant $C$ and an injective function $F:X\to \R^N$, for which the following inequalities hold for any $x,y\in X$:
\begin{equation}\label{eq:general bounds Assouad}
C^{-1} d^\alpha(x,y) \leqslant \| F(x) - F(y) \| \leqslant C d^\alpha(x,y).
\end{equation}
\end{theorem}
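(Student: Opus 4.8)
The plan is to push the problem into the metric category, where Theorem~\ref{thm:Assouad Theorem} is available, and then pull the resulting embedding back. First I would apply Theorem~\ref{thm:generalFrink}: fixing some $\varepsilon>0$ (say $\varepsilon=1$) it yields an exponent $p\in(0,1]$ and a metric $D$ on $X$ with $D(x,y)\leqslant d^{p}(x,y)\leqslant(1+\varepsilon)D(x,y)$ for all $x,y\in X$. By Lemma~\ref{lemma:doubling lemma} the semimetric $d^{p}$ inherits the doubling property (with some constant $C'$), and since $D$ is bi-Lipschitz equivalent to $d^{p}$ in the sense of \eqref{eq:boundTheoremDoubling}, Lemma~\ref{lemma:doubling Frink approach} upgrades this to the statement that $(X,D)$ is a genuine \emph{doubling metric space}. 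This is the object to which the classical result applies.

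Next I would feed $(X,D)$ into Theorem~\ref{thm:Assouad Theorem}. Since that theorem is stated only for exponents in $\left(\tfrac12,1\right)$, to obtain a given target exponent $\beta\in(0,1)$ relative to $D$ I would first snowflake: pick $\delta\in\bigl(\max\{\tfrac12,\beta\},1\bigr)$ and set $\gamma:=\beta/\delta\in(0,1]$. Then $D^{\gamma}$ is again a metric (a snowflake of the metric $D$ with exponent at most one) and is doubling by Lemma~\ref{lemma:doubling lemma}; applying Theorem~\ref{thm:Assouad Theorem} to $(X,D^{\gamma})$ with exponent $\delta$ produces an injective $F\colon X\to\R^{N}$ and a constant $C_{1}$ with $C_{1}^{-1}D^{\gamma\delta}(x,y)\leqslant\|F(x)-F(y)\|\leqslant C_{1}D^{\gamma\delta}(x,y)$, that is, with the prescribed exponent $\beta=\gamma\delta$. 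Substituting back the equivalence $D\asymp d^{p}$ then turns this into a two-sided bound on $d^{p\beta}$, so that a single Assouad application realises the snowflake exponent $p\beta$ relative to the original $b$-metric $d$, with the dimension $N$ controlled by the doubling constants appearing above.

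The step I expect to be the genuine obstacle is matching the target exponent $\alpha$ in \eqref{eq:general bounds Assouad}. The construction above delivers embeddings of exponent $p\beta$, where $\beta\in(0,1)$ is free but $p\in(0,1]$ is essentially dictated by the relaxation constant $K$ through Theorem~\ref{thm:generalFrink}; hence it covers every $\alpha\in(0,p)$ but cannot by itself reach $\alpha\geqslant p$. Indeed, rewriting the equivalence as $d\asymp D^{1/p}$ gives $d^{\alpha}\asymp D^{\alpha/p}$, and for $\alpha>p$ the exponent $\alpha/p$ exceeds one, so $d^{\alpha}$ is a power $>1$ of a metric and need not be bi-Lipschitz to any metric at all (for instance $d(x,y)=|x-y|^{2}$ on $\R$ is a doubling $b$-metric for which $d^{\alpha}\asymp|x-y|^{2\alpha}$ admits no embedding into any $\R^{N}$ once $2\alpha>1$). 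Thus the heart of the matter is a careful determination of the attainable range of $\alpha$: I would expect the honest conclusion to be $\alpha\in(0,p)$, i.e.\ $\alpha$ below a threshold governed by $K$, with the full interval $(0,1)$ available exactly when the $b$-metric is already comparable to a metric (so that $p=1$ is admissible). Establishing the claim literally for all $\alpha\in(0,1)$ would therefore require either such an additional hypothesis or a remetrization sharper than the one provided by Theorem~\ref{thm:generalFrink}.
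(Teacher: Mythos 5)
Your route is essentially the paper's own: apply Theorem~\ref{thm:generalFrink} with $\varepsilon=1$ to obtain $p\in(0,1]$ and a metric $D$ with $D\leqslant d^{p}\leqslant 2D$, use Lemma~\ref{lemma:doubling lemma} to see that $d^{p}$ is doubling and Lemma~\ref{lemma:doubling Frink approach} to transfer this to $D$, feed the doubling metric space $(X,D)$ into Theorem~\ref{thm:Assouad Theorem}, and convert the two-sided bound on $D^{\alpha}$ into one on $d^{p\alpha}$ at the cost of a factor $2^{\alpha}$. The difference lies entirely in the exponent bookkeeping, and there you are more careful than the paper. The paper applies Assouad once and simply relabels the exponent as $\alpha':=p\alpha$ with $\alpha\in\left(\frac12,1\right)$, so its argument actually delivers only exponents in $\left(\frac{p}{2},p\right)$; in effect it proves the existential statement ``there is \emph{some} exponent in $(0,1)$ for which \eqref{eq:general bounds Assouad} holds,'' not the statement quantified over all $\alpha\in(0,1)$ that the wording (read in parallel with Theorem~\ref{thm:Assouad Theorem}) suggests. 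Your extra snowflaking step (passing to $D^{\gamma}$, $\gamma\in(0,1]$, before invoking Assouad) legitimately enlarges the attainable range to all of $(0,p)$, with the one caveat that the doubling constant of $D^{\gamma}$ produced by Lemma~\ref{lemma:doubling lemma} grows as $\gamma\to 0$, so a single dimension $N$ serving every such exponent needs a separate uniformity argument (the paper avoids this only by fixing one exponent).

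Your ``genuine obstacle'' is also real and your counterexample is valid: $d(x,y)=|x-y|^{2}$ on $\R$ is a doubling $b$-metric with $K=2$, and for $\alpha>\frac12$ a chain of $n$ equally spaced points between $0$ and $1$ forces $\|F(0)-F(1)\|\leqslant C\,n^{1-2\alpha}\to 0$ for any map satisfying \eqref{eq:general bounds Assouad}, contradicting the lower bound; so the full range $\alpha\in(0,1)$ is not attainable in general. Do not read the restriction to $\alpha\in(0,p)$ as a defect of your argument — it is the honest form of the conclusion, and the paper's proof establishes no more than that; its statement should be understood with the exponent range restricted (depending on the relaxation constant $K$ through $p$), exactly as its own substitution $\alpha':=p\alpha$ indicates.
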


\begin{proof}
From Theorem \ref{thm:generalFrink} for $\varepsilon=1$ we obtain the existence of a metric $D$ and a power $p\in (0,1]$ such that for all $x,y\in X$
\begin{equation}\label{eqn:bounds_in_the_Assouad_Theorem}
D(x,y)\leqslant d^p(x,y) \leqslant 2 D(x,y).
\end{equation}
From Lemma \ref{lemma:doubling lemma} we obtain that $d^p$ is a doubling semimetric (since $d$ was doubling). In turn, from Lemma \ref{lemma:doubling Frink approach} we infer that doubling property of $d^p$ guarantees the same property for $D$. Applying Theorem \ref{thm:Assouad Theorem} yields the existence of real constants $C>0$, $\alpha\in\left(\frac{1}{2},1\right)$ and an embedding $F:X\to \mathbb{R}^N$ for which
\[
C^{-1}\cdot D^\alpha(x,y) \leqslant \|F(x)-F(y)\| \leqslant C\cdot D^\alpha(x,y)
\]

Notice that bounds from \eqref{eqn:bounds_in_the_Assouad_Theorem} yield
\[
C^{-1}\cdot D^\alpha(x,y) \geqslant C^{-1}\cdot \left(\frac{d(x,y)^p}{2}\right)^{\alpha} = \left(2^\alpha C\right)^{-1} \cdot d^{p\alpha}(x,y),
\]
as well as
\[
C\cdot D^\alpha(x,y) \leqslant C\cdot d(x,y)^{p\alpha} \leqslant \left(2^{\alpha} C \right) \cdot d^{p\alpha}(x,y).
\]
From these two inequalities we conclude that desired inequalities \eqref{eq:general bounds Assouad} hold if we substitute the constants for $\alpha':=p\alpha$ and $C':=2^\alpha \cdot C$.
\end{proof}

Thus we have extended the range of Assouad theorem to the scope of $b$-metric spaces which have the doubling property.

\section{The limitations}

We might wonder, whether it is possible for some broader class of doubling semimetric spaces to be bi-Lipschitz embeddable in $\R^N$ after raising the discussed distance function to a power. It turns out that being a $b$-metric space is a necessary condition for a space to be bi-Lipschitz embeddable in any metric space, not just $\R^N$. This claim is put in a formal statement in the following

\begin{theorem}[\textbf{On embedding semimetric spaces in metric spaces}]\label{thm:limits of Assouad Theorem}
Let $(X,d)$ be a semimetric space and assume that for some power $\alpha\in (0,1]$, there exists a mapping $F:X\to Y$ which is a bi-Lipschitz embedding of $(X,d^\alpha)$ into a metric space $(Y,d_Y)$. Then $(X,d)$ is a $b$-metric space.
\end{theorem}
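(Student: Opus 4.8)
The plan is to transport the genuine triangle inequality from the metric target $(Y,d_Y)$ back to $X$ through the embedding, and then to undo the exponent $\alpha$. Write the bi-Lipschitz condition as
\[
L_1\, d^\alpha(x,y) \leqslant d_Y(F(x),F(y)) \leqslant L_2\, d^\alpha(x,y)
\]
for suitable constants $0 < L_1 \leqslant L_2$; the inequality $L_1 \leqslant L_2$ is forced by evaluating at any pair $x\neq y$. First I would fix three points $x,y,z\in X$ and chain the triangle inequality in $Y$ with these bounds: the lower bound on $d_Y(F(x),F(z))$ together with the upper bounds on $d_Y(F(x),F(y))$ and $d_Y(F(y),F(z))$ gives
\[
L_1\, d^\alpha(x,z) \leqslant d_Y(F(x),F(z)) \leqslant d_Y(F(x),F(y)) + d_Y(F(y),F(z)) \leqslant L_2\bigl(d^\alpha(x,y) + d^\alpha(y,z)\bigr).
\]
Dividing by $L_1$ shows that $d^\alpha$ already satisfies a relaxed triangle inequality with constant $M := L_2/L_1 \geqslant 1$.

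The remaining and genuinely new step is to pass from a relaxed triangle inequality for $d^\alpha$ to one for $d$ itself. Setting $a = d(x,y)$, $b = d(y,z)$, $c = d(x,z)$, the previous display reads $c^\alpha \leqslant M(a^\alpha + b^\alpha)$, and raising to the power $1/\alpha$ gives $c \leqslant M^{1/\alpha}(a^\alpha + b^\alpha)^{1/\alpha}$. Here is where the hypothesis $\alpha\leqslant 1$ is essential: since $1/\alpha \geqslant 1$, the map $t\mapsto t^{1/\alpha}$ is convex, so applying the midpoint form of Jensen's inequality (equivalently, the power-mean inequality) to the two numbers $a^\alpha$ and $b^\alpha$ yields $(a^\alpha + b^\alpha)^{1/\alpha} \leqslant 2^{1/\alpha - 1}(a + b)$. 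Combining the two bounds produces $d(x,z) \leqslant K\,(d(x,y) + d(y,z))$ with $K := 2^{1/\alpha - 1} M^{1/\alpha}$.

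Finally I would verify that $K$ is an admissible relaxation constant, i.e. $K \geqslant 1$: this is immediate since $M \geqslant 1$ and $1/\alpha - 1 \geqslant 0$, so both factors are at least $1$, whence $(X,d)$ is a $b$-metric space. The main (and essentially only) obstacle is the elementary exponent-manipulation inequality $(a^\alpha + b^\alpha)^{1/\alpha} \leqslant 2^{1/\alpha-1}(a+b)$; everything else is a direct chase through the embedding. It is worth emphasizing that the argument never uses any structure of $Y$ beyond its triangle inequality, which is precisely why the conclusion holds for embeddings into arbitrary metric spaces and not merely into $\R^N$.
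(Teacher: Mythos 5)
Your proof is correct and takes essentially the same route as the paper: transport the triangle inequality of $(Y,d_Y)$ back to $X$ through the bi-Lipschitz bounds to obtain a relaxed triangle inequality for $d^\alpha$, then raise to the power $1/\alpha$. The only divergence is in the elementary final step: the paper bounds $d^\alpha(x,y)+d^\alpha(y,z)$ by twice its maximum before taking the $1/\alpha$-th power (giving $K=2^{1/\alpha}C^{2/\alpha}$), while you use convexity of $t\mapsto t^{1/\alpha}$ (the power-mean inequality), which yields the marginally sharper constant $2^{1/\alpha-1}M^{1/\alpha}$.
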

\begin{proof}
Let $C$ be the bi-Lipschitz constant of the embedding $F$, i.e.
\[
C^{-1}\cdot d^\alpha(x,y) \leqslant d_Y\left(F(x),F(y)\right) \leqslant C\cdot d^\alpha(x,y)
\] 	
Let $x,y,z \in X$  be an arbitrary triplet of points. We have
\begin{eqnarray*}
d^\alpha(x,z) &\leqslant& C\cdot d_Y\left(F(x),F(z)\right)\\
& \leqslant&
C \cdot \left( d_Y\left(F(x),F(y) \right) + d_Y\left(F(y),F(z)\right) \right)\\
&\leqslant& C^2\cdot \left( d^\alpha(x,y) + d^\alpha(y,z) \right)\\
&\leqslant& 2 C^2 \cdot \max\left\{d^\alpha(x,y), d^\alpha(y,z) \right\},
\end{eqnarray*}
where the last inequality comes from the fact, that a sum of two numbers is bound from above by twice their maximum.
Therefore we arrive at
\[d^\alpha(x,z) \leqslant 2 C^2 \cdot \max\left\{d^\alpha(x,y), d^\alpha(y,z) \right\}.
\]
If we raise both sides to the power $\frac{1}{\alpha}$, we will obtain the desired conclusion
\begin{equation*}
d(x,z) \leqslant 2^\frac{1}{\alpha} C^\frac{2}{\alpha} \cdot \max\left\{d(x,y), d(y,z) \right\}\leqslant 2^\frac{1}{\alpha} C^\frac{2}{\alpha} \cdot \left( d(x,y) + d(y,z) \right). \qedhere
\end{equation*}
\end{proof}

\begin{remark}\label{r4.2}
The procedure of raising a metric to the power $\alpha\in (0,1)$ is sometimes referred to as snowflaking (see \cite{David2013, TWa}) and this is a special case of transforming of metrics by ultrametric-preserving functions (see \cite{BiletDovgosheyShanin2021, Dovgoshey2020, PongsriiamTermwuttipong2014, VallinDovgoshey2021}).
\end{remark}

Theorem~\ref{thm:limits of Assouad Theorem} strongly resembles the following theorem, which is due to Fagin et al. \cite[Theorem 4.4]{Fagin2003}:

\begin{theorem}[\textbf{Fagin et al. metric boundedness condition}]
A semimetric space $(X,d)$ satisfies the $c$-relaxed polygonal inequality for some $c\geqslant 1$, i.e.,
\[
\forall n\in\N \quad \forall x_1,\dots,x_n\in X \quad d(x_1,x_n)\leqslant c \sum_{i=1}^{n-1} d(x_i,x_{i+1});
\]
if and only if there exists a metric $D$ on $X$ such that
\[
\forall x,y \in X \quad D(x,y) \leqslant d(x,y)\leqslant c D(x,y).
\]

\end{theorem}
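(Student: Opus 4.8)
The plan is to prove both implications of this biconditional, treating it as a characterization of which semimetric spaces admit a metric lying within a multiplicative factor $c$. For the harder direction, I would assume the $c$-relaxed polygonal inequality holds and construct the metric $D$ explicitly via a chain (infimum-over-paths) construction, which is the standard Frink-type remetrization idea already invoked elsewhere in the paper. Specifically, I would define
\[
D(x,y) := \inf \sum_{i=1}^{n-1} d(x_i,x_{i+1}),
\]
where the infimum ranges over all finite chains $x = x_1, x_2, \dots, x_n = y$ connecting $x$ to $y$. The easy features of this construction come for free: symmetry is inherited from $d$, the triangle inequality for $D$ holds because concatenating two chains yields another admissible chain, and $D(x,y) \leqslant d(x,y)$ since the trivial two-point chain $x,y$ is always admissible. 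This immediately gives the right-hand inequality $D(x,y) \leqslant d(x,y)$.

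The crux of this direction — and the main obstacle — is the lower bound $d(x,y) \leqslant c\, D(x,y)$, together with the positivity needed for $D$ to be a genuine metric (namely $D(x,y) > 0$ whenever $x \neq y$). Here is exactly where the $c$-relaxed polygonal inequality is indispensable: for \emph{any} admissible chain from $x$ to $y$, the hypothesis gives $d(x,y) \leqslant c \sum_{i=1}^{n-1} d(x_i,x_{i+1})$. Taking the infimum over all such chains on the right-hand side yields $d(x,y) \leqslant c\, D(x,y)$. This single inequality simultaneously delivers the lower bound and the positivity of $D$ on distinct points (since $d(x,y) > 0$ forces $D(x,y) \geqslant c^{-1} d(x,y) > 0$). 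I would stress that without the uniform constant $c$ controlling \emph{all} polygonal paths at once, the infimum could collapse to zero and $D$ would fail to separate points.

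For the reverse implication, which is the routine direction, I would assume such a metric $D$ exists satisfying $D(x,y) \leqslant d(x,y) \leqslant c\, D(x,y)$ and derive the polygonal inequality directly. Fix points $x_1, \dots, x_n$. Applying the left-hand bound once, the triangle inequality for $D$ iterated along the chain, and then the right-hand bound term by term gives
\[
d(x_1,x_n) \leqslant c\, D(x_1,x_n) \leqslant c \sum_{i=1}^{n-1} D(x_i,x_{i+1}) \leqslant c \sum_{i=1}^{n-1} d(x_i,x_{i+1}),
\]
which is precisely the $c$-relaxed polygonal inequality. This computation is entirely analogous in spirit to the chain of inequalities appearing in the proof of Theorem~\ref{thm:limits of Assouad Theorem}, so I would keep it brief. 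The only subtlety worth a remark is that the metric $D$ furnished in the forward direction need not be the \emph{optimal} one, but for the biconditional we only need existence, so the chain construction suffices.
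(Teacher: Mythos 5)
Your proof is correct: the chain (infimum-over-paths) construction works exactly as you describe, the $c$-relaxed polygonal inequality is precisely what gives $d(x,y)\leqslant c\,D(x,y)$ and hence positivity of $D$ on distinct points, and the reverse implication is the routine iteration of the triangle inequality for $D$. Note, however, that the paper itself offers no proof of this statement --- it is quoted verbatim from Fagin, Kumar and Sivakumar \cite[Theorem 4.4]{Fagin2003} --- so there is nothing internal to compare against; your argument is the standard one and coincides with the approach in that cited source.
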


\section{Concluding remarks and open problems}

The measure-theoretic aspects of doubling and weakly doubling semimetric spaces remain (to the best of our knowledge) undiscovered. Perhaps additional research in this area might yield interesting results, which will give us a better insight in the nature of such objects. Nevertheless, little is known about generalizations of Assouad theorem considering the remaining two axioms of metric. This might also provide intriguing research opportunities.

Regarding the difference between weakly doubling and doubling properties, after revisiting the remarks below Example \ref{ex:31}, one can ask a following question:

\textbf{Open problem:} What are the weakest assumptions under which the notions of weakly doubling and doubling semimetric space coincide?

\begin{conjecture}[prove or disprove]
Let \((X, d)\) be a semimetric space. Then the following statements are equivalent:
\begin{enumerate}
\item [\((i)\)] Every ball is a bounded subset of \((X, d)\).
\item [\((ii)\)] The space \((X, d)\) is doubling if and only if it is weakly doubling.
\end{enumerate}
\end{conjecture}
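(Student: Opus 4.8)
The plan is to \emph{disprove} the conjecture as stated. The weak point is that statement $(ii)$ is a biconditional, and a biconditional ``$P \iff Q$'' is vacuously true whenever both $P$ and $Q$ are false. Thus it suffices to exhibit a semimetric space in which \emph{some ball is unbounded} (so $(i)$ fails) while the space is \emph{simultaneously not doubling and not weakly doubling} (so $(ii)$ holds trivially). First I would isolate the two failure mechanisms separately: Example~\ref{ex:31} already supplies an unbounded ball together with a failure of the doubling property, and an infinite equidistant set is the canonical obstruction to weak doubling. The task is then to graft them together without either mechanism spoiling the other.

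Concretely, I would take $X = \Z \sqcup E$, where $\Z$ carries the semimetric of Example~\ref{ex:31} and $E = \{e_i : i \in \N\}$ is an infinite set with $d(e_i,e_j)=1$ for $i\neq j$, and I would set every cross distance $d(z,e_i)$ equal to a fixed constant strictly larger than $\tfrac{3}{2}$. Then $\ball{0}{3/2}=\Z$ is unbounded, so $(i)$ fails; every ball of radius $\tfrac{3}{4}$ contains at most one integer, so the unbounded ball $\ball{0}{3/2}=\Z$ cannot be covered by finitely many of them and the space is not doubling; and taking $A=\{e_1,\dots,e_{C+1}\}$ shows that no fixed $C$ witnesses weak doubling, since any set of diameter $\leq \tfrac12$ meets $E$ in at most one point. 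Hence both properties fail, $(ii)$ holds, $(i)$ fails, and the claimed equivalence collapses in the direction $(ii)\Rightarrow(i)$. Checking the semimetric axioms and these three points is routine; the only care needed is that the constant cross distance is large enough to keep $E$ out of the unbounded ball while introducing no new covering balls.

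Having disproved the biconditional, I would then turn to the genuinely substantive direction $(i)\Rightarrow(ii)$ and argue that it, too, should fail. The natural attempt is to iterate: place a bounded set inside a ball via the Observation, apply the doubling (resp.\ weak doubling) property repeatedly, and convert the resulting small sets into small balls. The hard part — and the real content of the conjecture — is a \emph{scale mismatch}: weak doubling halves \emph{diameters} whereas doubling halves \emph{radii}, and in a semimetric space $\diam \ball{x}{r}$ is in no way controlled by $r$. Condition $(i)$ only forces each $\diam \ball{x}{r}$ to be finite, not uniformly comparable to $r$, so the iteration produces roughly $C^{\log_2(\diam \ball{x}{r}/r)}$ covering sets, a count that need not be bounded over all balls. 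I therefore expect $(i)\Rightarrow(ii)$ to be false as well, and would try to realize this with a space built from gadgets $\{x_n\}\cup M_n$, where $M_n$ is an $\tfrac{r_n}{2}$--separated finite subset of $\R$ with $\card{M_n}\to\infty$ and $\diam M_n / r_n \to \infty$, and $x_n$ is a near-center with $d(x_n,p)=\tfrac{3r_n}{4}$ for all $p\in M_n$; each ball $\ball{x_n}{r_n}=\{x_n\}\cup M_n$ then needs $\card{M_n}+1$ subballs of radius $\tfrac{r_n}{2}$, destroying doubling.

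The principal obstacle in this second construction will be assembling the gadgets into a single space that is globally weakly doubling with one uniform constant while keeping every ball bounded. Choosing the cross distances between gadgets is delicate: making them all equal would create an infinite equidistant set and destroy weak doubling, whereas letting them grow along an unbounded set would resurrect unbounded balls and violate $(i)$. The natural fix is to arrange the gadget-clusters along a convergent (hence bounded) configuration whose between-cluster geometry is itself doubling as a metric space, but one must then verify that the fine-scale gadget structure does not interact with the coarse-scale arrangement so as to spawn large equidistant sets. Finally, I would remark that the conjecture is presumably rescued if $(i)$ is strengthened to a quantitative bound $\diam \ball{x}{r}\leq \psi(r)$ — in particular the relaxed triangle inequality of a $b$-metric space gives $\diam \ball{x}{r}\leq 2Kr$ — under which the iteration closes with a uniform constant and the equivalence anticipated in the remarks after Example~\ref{ex:31} is recovered.
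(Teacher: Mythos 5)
First, a point of reference: the paper offers no proof of this statement --- it is posed explicitly as an open problem --- so there is nothing of the authors' to compare your argument against; what follows assesses your proposal on its own terms. Your first construction is correct and does refute the conjecture as literally stated. Adjoining to the space of Example~\ref{ex:31} an infinite set $E=\{e_i\}_{i\in\N}$ with $d(e_i,e_j)=1$ for $i\neq j$ and all cross-distances equal to a constant exceeding $\frac32$ (say $2$), one checks routinely that: $\ball{0}{3/2}=\Z$ still has infinite diameter, so $(i)$ fails; every ball of radius $\frac34$ is a singleton while $\ball{0}{3/2}$ is infinite, so the space is not doubling; and $\{e_1,\dots,e_{C+1}\}$ has diameter $1$ yet meets every set of diameter at most $\frac12$ in at most one point, so no constant $C$ witnesses weak doubling. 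With both sides of the biconditional in $(ii)$ false, $(ii)$ holds vacuously while $(i)$ fails, so the stated equivalence is disproved in the direction $(ii)\Rightarrow(i)$.

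Be clear, however, about what this buys. You refute the letter of the conjecture by exploiting the vacuous truth of a biconditional with two false sides; the question the authors are actually after --- whether boundedness of all balls forces the doubling and weakly doubling properties to coincide, i.e.\ the direction $(i)\Rightarrow(ii)$ --- is untouched, and your second construction aimed at it is a sketch, not a proof. You isolate the right obstruction (weak doubling halves diameters, doubling halves radii, and $(i)$ gives only finiteness, not uniform control, of $\diam \ball{x}{r}$ in terms of $r$), and the gadgets $\{x_n\}\cup M_n$ are plausible, but you never exhibit a single space that simultaneously keeps every ball bounded, is weakly doubling with one uniform constant, and fails doubling; as you yourself note, the cross-distances between gadgets are exactly where such constructions tend to collapse, either by creating large equidistant sets or by resurrecting unbounded balls. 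If you write this up, state precisely that you disprove $(ii)\Rightarrow(i)$ and leave $(i)\Rightarrow(ii)$ open, and consider adding the rescued quantitative statement you mention (under $\diam \ball{x}{r}\leqslant 2Kr$, as in $b$-metric spaces, the iteration does close), since that is the version most in the spirit of the remarks following Example~\ref{ex:31}.
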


\section{Additional notes}

Oleksiy Dovgoshey was partially supported by Volkswagen Stif\-tung Project ``From Modeling and Analysis to Approximation''.

Filip Turoboś has been supported by the Polish National Agency for Academic Exchange under the NAWA Urgency Grant programme.

\end{document}